\documentclass{amsart}
\usepackage{amssymb}

\newtheorem{Con}{Conjecture}
\newtheorem{Lem}{Lemma}
\newcommand{\Z}{\mathbb{Z}}
\begin{document}
\title{Addendum to "On triangular billard"}
\author{Jan-Christoph Schlage-Puchta}
\maketitle
In \cite{KS}, the authors classified non-isosceles rational-angled triangles with the lattice property subject to the following assumption.
\begin{Con}
Let $1\leq s, t\leq n$ be integers such that $(n,s,t)=1$. Assume that for all $a$ with $(a,n)=1$ we have $\frac{n}{2}<as\bmod n + at\bmod n<\frac{3n}{2}$. Then one of  $n\leq 78$, $s+t=n$, $s+2t=n$, $2s+t=n$, or $n$ even, $|s-t|=\frac{n}{2}$ holds true.
\end{Con}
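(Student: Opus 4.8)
The plan is to recast the hypothesis geometrically and turn it into a lattice–point count. Writing $v(a)=(as\bmod n)+(at\bmod n)$, the condition is that $n/2<v(a)<3n/2$ for every reduced residue $a$. Since $a\mapsto -a$ permutes the reduced residues and $v(-a)=2n-v(a)$ whenever $n\nmid as$ and $n\nmid at$ (the case $s=n$ or $t=n$ being disposed of directly), the two inequalities are equivalent. It therefore suffices to decide when \emph{no} reduced residue $a$ satisfies $v(a)\le n/2$, i.e. when the orbit $\{(as\bmod n,\,at\bmod n)\}$ over reduced residues avoids the corner triangle $T=\{x,y\ge 0,\ x+y\le n/2\}$. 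I will show this forces one of the stated linear relations once $n$ is large.

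Next I would set up a divisor identity. Let $A(n)$ be the number of reduced residues $a$ with $(as\bmod n,\,at\bmod n)\in T$, and let $B(n)$ be the number of all nonzero residues with this property. Sorting a nonzero $a$ by $d=\gcd(a,n)$, such a point equals $d$ times a reduced–residue point of the companion problem modulo $n/d$ (whose data $(s\bmod n/d,\,t\bmod n/d)$ again satisfies the coprimality hypothesis), so that
\[
B(n)=\sum_{e\mid n}A(e),\qquad\text{whence}\qquad A(n)=\sum_{e\mid n}\mu(n/e)\,B(e)
\]
by M\"obius inversion. As $T$ has area $n^2/8$ and the orbit has point density $1/e$ modulo $e$, the heuristic $B(e)\approx e/8$ produces the main term $\sum_{e\mid n}\mu(n/e)\,e/8=\varphi(n)/8$. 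Thus I expect $A(n)=\varphi(n)/8+\sum_{e\mid n}\mu(n/e)\,\Delta(e)$ with $\Delta(e)=B(e)-e/8$, and the entire problem reduces to controlling the discrepancies $\Delta(e)$.

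The key input is a bound on $\Delta(e)$. Because the orbit is the reduction of a rank–two lattice, equidistribution (via Erd\H{o}s--Tur\'an, or via an exact evaluation of $B(e)$ as a Dedekind--Rademacher sum) should give $\Delta(e)$ small, polynomial in $\log e$ or $O(\sqrt e)$, \emph{unless} the reduced pair $(s,t)\bmod e$ satisfies one of $x+y\equiv0$, $x+2y\equiv0$, $2x+y\equiv0$ or $2x-2y\equiv0\pmod e$; these are exactly the directions in which the orbit becomes aligned along lines missing $T$, and there $\Delta(e)$ can be as negative as $-e/8$. One checks directly that each listed family indeed keeps every $v(a)>n/2$ (the orbit sits on lines of slope $-1,-2,-\tfrac12$ or $+1$ lying in the central band), so the families are genuine solutions and correspond to $A(n)=0$ through cancellation in the M\"obius sum.

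The main obstacle is disentangling these resonant divisors. To finish I must show that for $n$ outside the families the negative contributions $\mu(n/e)\Delta(e)$ cannot accumulate to $-\varphi(n)/8$: a divisor $e$ carrying the extremal value $-e/8$ forces a family relation modulo $e$, and I expect to prove a rigidity statement to the effect that such relations can hold coherently across all divisors of $n$ only when they already hold modulo $n$ itself. Combining the effective discrepancy bound with this rigidity yields $A(n)>0$, hence a reduced residue in $T$, as soon as $n$ exceeds an explicit constant; a finite verification then pins the threshold at $n\le 78$ and settles the remaining sporadic configurations. The delicate point throughout is the uniformity of the bound for $\Delta(e)$ near the family lines, where the count of orbit points in $T$ is governed by large partial quotients of the relevant continued fraction.
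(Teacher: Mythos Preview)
Your geometric reformulation and the M\"obius identity $A(n)=\sum_{e\mid n}\mu(n/e)B(e)$ are correct, but the load-bearing claim---that $\Delta(e)=B(e)-e/8$ is $O(\sqrt e)$ or polylogarithmic outside the four listed families---is false. The discrepancy of the one-parameter orbit $a\mapsto(as,at)\bmod e$ against the triangle is governed by the shortest nonzero vector $(h_1,h_2)$ with $h_1s+h_2t\equiv0\pmod e$, and \emph{every} such short resonance, not only $(1,1)$, $(1,2)$, $(2,1)$, $(2,-2)$, forces $\Delta(e)$ of order~$e$. For instance $(s,t)\equiv(1,2)\pmod e$ (resonance $(2,-1)$) gives $B(e)\approx e/6$, hence $\Delta(e)\approx e/24$; and $s+3t\equiv0\pmod e$ (resonance $(1,3)$) gives $B(e)\approx e/12$, hence $\Delta(e)\approx -e/24$. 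Neither lies in your list, and since $\mu(n/e)$ changes sign, even a positive $\Delta(e)$ can subtract from $A(n)$. What you actually need is that the signed sum $\sum_{e\mid n}\mu(n/e)\Delta(e)$ never reaches $-\varphi(n)/8$ unless one of the four relations already holds modulo $n$; your unproved ``rigidity statement'' is precisely this, and it is not visibly easier than the conjecture itself. The acknowledged ``delicate point'' about large partial quotients is thus not a technicality at the margin but the entire problem.

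The paper takes a completely different, elementary route: it first eliminates all cases with $(n,s)>6$ or $(n,t)>6$ via Jacobsthal's function and effective prime-counting in progressions, then invokes the previously settled case $(n,s)=1$ together with $((n,s),(n,t))=1$ to reduce to $s=3$, $6\mid n$. There it introduces $f(n)=\min\{a:\gcd(a(a+2),n)=1\}$, shows $f(n)\le 5n/(p_1p_2)$ for the two largest prime divisors $p_1,p_2>3$ of $n$, proves that any counterexample forces $f(n)\ge(n-12)/18$, and combines these to push $n$ below the computer-verified range. No equidistribution or Dedekind-sum estimate enters.
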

In \cite{Billardold} the present author proved this statement under the additional assumption that $(n,s)=1$. Due to some misunderstanding it appeared that this was sufficient for the classification of triangles with the lattice property. Recently, Wright pointed out that the proof of the classification is in fact incomplete. Here we remedy this by proving Conjecture 1 in the form given above.

The status of the conjecture before this note is the following.
\begin{Lem}
\label{Lem:previous}
\begin{enumerate}
\item Conjecture 1 holds true if $(n,s)=1$ (cf. \cite{Billardold});
\item Conjecture 1 holds true if $n<10000$ (cf. \cite{KS}).
\end{enumerate}
\end{Lem}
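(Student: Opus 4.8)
Both clauses are drawn from the literature, so a proof amounts to reconstructing the two arguments. \textbf{Part (1).} The plan is to exploit that the hypothesis quantifies over \emph{every} $a$ coprime to $n$. Since $(n,s)=1$, the map $a\mapsto as\bmod n$ permutes the reduced residues; writing $b=as\bmod n$ and $r\equiv s^{-1}t\pmod n$ one has $at\equiv br\pmod n$, so the hypothesis for $(s,t)$ is equivalent to
\[
\tfrac{n}{2} < b + (br\bmod n) < \tfrac{3n}{2}\qquad\text{for every reduced residue }b,
\]
that is, to the same hypothesis for the normalized pair $(1,r)$. This reduces Conjecture~1, under $(n,s)=1$, to a one-parameter problem. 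After this normalization the four admissible families correspond to residue conditions on $r$: $s+t\equiv0$ gives $r\equiv-1$, $2s+t\equiv0$ gives $r\equiv-2$, $s+2t\equiv0$ gives $r\equiv-2^{-1}$, and the even case gives $r\equiv1+\tfrac{n}{2}$. This list is stable under $r\mapsto r^{-1}$, matching the symmetry $s\leftrightarrow t$, which is a useful consistency check.

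\textbf{Part (1), main step.} It remains to show that for $n>78$ the one-parameter hypothesis forces $r$ into one of these four classes. I would argue contrapositively: if $r$ avoids them, I would exhibit a reduced residue $b$ with $f(b):=b+(br\bmod n)$ either $\le n/2$ or $\ge 3n/2$. The size of $f(b)$ is controlled by $(br\bmod n)$, and using the continued-fraction expansion of $r/n$ (equivalently the three-distance theorem) one locates reduced residues $b$ at which $br\bmod n$ is extreme; unless $r$ sits in a distinguished class, some such $b$ drives $f(b)$ outside $(n/2,3n/2)$. Making this effective—so that the single threshold $n\le78$ accounts for all exceptions—is the technical heart of \cite{Billardold} and the step I expect to be the main obstacle, since one must control the joint behaviour of $b$ and $br\bmod n$ rather than a single equidistributed quantity. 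Translating the surviving $r$-classes back through $t\equiv sr$ then recovers exactly the four families (read modulo $n$).

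\textbf{Part (2).} This is a finite verification. For each $n<10000$ one enumerates the pairs $(s,t)$ with $1\le s,t\le n$ and $(n,s,t)=1$, tests the inequality for every reduced residue $a$, and confirms that each surviving pair either satisfies one of the four congruences or has $n\le78$. Caching the residues $as\bmod n$ and using the symmetries $(s,t)\leftrightarrow(t,s)$ and $a\leftrightarrow n-a$—the latter sending the sum $X$ to $2n-X$, under which the condition $n/2<X<3n/2$ is invariant—keeps the computation within reach; the only obstacle is to organize it so that the stated range is genuinely and rigorously covered, as carried out in \cite{KS}.
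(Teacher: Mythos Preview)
The paper gives no proof of this lemma at all: it is stated purely as a citation of prior work, with part~(1) attributed to \cite{Billardold} and part~(2) to \cite{KS}, and the text moves on immediately. So there is nothing in the paper to compare your argument against; your attempt to reconstruct the two proofs goes well beyond what the paper does.

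If the question is instead whether your reconstruction is faithful to the cited sources, the evidence in the present paper suggests it is not, at least for part~(1). Your sketch pivots on the continued-fraction/three-distance structure of $br\bmod n$. But the paper's own remarks indicate that the argument in \cite{Billardold} is of a different flavour: the closing paragraph issues an erratum to a lemma in \cite{Billardold} about Jacobsthal's function $g(n)$ (bounding gaps between integers coprime to $n$), and the introduction notes that in \cite{Billardold} ``several special cases with $n$ ranging up to $300000$ had to be checked numerically.'' Both point to a Jacobsthal-type gap argument followed by substantial case-by-case computation, rather than a Diophantine-approximation approach with the clean cutoff $n\le 78$ that you describe. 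Your normalization to the pair $(1,r)$ via $r\equiv s^{-1}t$ is sound and is the natural first move, but the ``main step'' you outline is, by your own admission, only a plan, and it does not appear to match the method actually used. For part~(2) your description of a finite enumeration is reasonable and presumably close to what \cite{KS} did, but again the paper itself offers no details to confirm this.
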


The first result will be used at the end of section~\ref{sec:reduce}, while the second will be used without further mentioning at various places. It turns out that the case $(n,s)=1$ is more difficult than the general case. The reason is that a common divisor introduces some additional symmetry, which greatly simplifies our argument. In particular the bound $n\geq 10000$ is more than sufficient in our argument, whereas in \cite{Billardold} several special cases with $n$ ranging up to $300000$ had to be checked numerically.

\section{Reduction to the case $s=3$}
\label{sec:reduce}
As usual, put $\theta(x, q, a)=\sum\limits_{\underset{p\equiv a\pmod{q}}{p\leq x}}\log p$.
\begin{Lem}
\label{Lem:PNT}
Let $q\leq 10$ be an integer, $(a,q)=1$, and assume that $x\geq 89$. Then $\theta(x,q,a)\geq\frac{x}{2\varphi(q)}$.
\end{Lem}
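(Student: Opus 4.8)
The plan is to split the range $x\geq 89$ into a long initial segment $89\leq x\leq X_0$, handled by direct computation, and a tail $x\geq X_0$, handled by an effective prime number theorem for arithmetic progressions. The crucial observation is that there is a factor-of-two safety margin: the prime number theorem for progressions gives $\theta(x,q,a)\sim\frac{x}{\varphi(q)}$, which is twice the target $\frac{x}{2\varphi(q)}$, so I only need the \emph{downward} relative deviation of $\theta(x,q,a)$ from its main term to stay below $\frac12$, a very weak requirement. Concretely, I would invoke an explicit estimate of the shape $\bigl|\theta(x,q,a)-\frac{x}{\varphi(q)}\bigr|<\varepsilon_q\,\frac{x}{\varphi(q)}$ valid for $x\geq X_0$ and all $q\leq 10$, $(a,q)=1$ — for instance the bounds of Ramaré and Rumely, or the sharper ones of Bennett, Martin, O'Bryant and Rechnitzer — with any $\varepsilon_q<\frac12$. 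For such small moduli these produce $\varepsilon_q$ far below $\frac12$ already for moderate $X_0$, whence $\theta(x,q,a)>\frac{x}{2\varphi(q)}$ for all $x\geq X_0$. (The case $q=1$ is the classical Chebyshev-type bound $\theta(x)\geq\frac{x}{2}$, covered by the explicit estimates of Rosser--Schoenfeld or Dusart.)

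For the finite range $89\leq x\leq X_0$ I would reduce the continuum of $x$ to a finite check. Fix $q\leq 10$ and a residue $a$ with $(a,q)=1$, and let $p_1<p_2<\cdots$ be the primes in the class $a\bmod q$. On each interval $[p_k,p_{k+1})$ the function $x\mapsto\theta(x,q,a)$ is constant while $\frac{x}{2\varphi(q)}$ is increasing, so the inequality is tightest as $x\to p_{k+1}^{-}$; hence it suffices to verify $\theta(p_k,q,a)\geq\frac{p_{k+1}}{2\varphi(q)}$ for every consecutive pair with $p_{k+1}\leq X_0$, together with the endpoint conditions at $x=89$ and at $x=X_0$. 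This is a finite enumeration over the primes up to $X_0$ and the $\sum_{q\leq 10}\varphi(q)$ admissible residue classes, which I would carry out by machine.

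The point $x=89$ is genuinely sharp, which explains the hypothesis. Taking $q=8$ and $a\equiv 1\pmod 8$, the primes below $89$ in this class are $17,41,73$, so $\theta(88,8,1)=\log(17\cdot 41\cdot 73)\approx 10.84$, which is smaller than $\frac{88}{8}=11$; the inequality is only restored at $x=89$, since $89$ is prime and $89\equiv 1\pmod 8$, raising $\theta$ to about $15.3$. The main obstacle is therefore not the asymptotics but keeping $X_0$ small enough that the finite verification is genuinely feasible: everything hinges on having an explicit prime number theorem for progressions modulo $q\leq 10$ with a usable threshold $X_0$. This is exactly what the cited explicit estimates provide — and if one wants a smaller $X_0$ one may feed in the numerically verified zeros of the Dirichlet $L$-functions of modulus at most $10$ — after which the computation in the bounded range completes the proof.
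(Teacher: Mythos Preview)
Your proposal is correct and follows essentially the same strategy as the paper: invoke the explicit Ramar\'e--Rumely estimates for large $x$ and verify the remaining bounded range directly. The paper is simply more concrete, using Ramar\'e--Rumely's Theorem~1 for $x>10^{10}$ and their Theorem~2 bound $\bigl|\theta(x,q,a)-\tfrac{x}{\varphi(q)}\bigr|<2.072\sqrt{x}$ for $x\leq 10^{10}$, which already gives the claim for $x\geq 619$ and leaves only $89\leq x<619$ to be checked by hand.
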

\begin{proof}
For $x>10^{10}$ this follows from effective estimates for the prime number theorem in arithmetic progressions by Ramar\'e and Rumely, see \cite[Theorem~1]{PNT}. For $x<10^{10}$ we have $\left|\theta(x, q, a)-\frac{x}{\varphi(q)}\right|< 2.072\sqrt{x}$, see \cite[Theorem~2]{PNT}. This implies our claim for $x\geq 619$. Checking the missing range is a trivial task.
\end{proof}
For an integer $n$ denote by $g(n)$ Jacobsthal's function, i.e. the maximal difference between consecutive integers, which are coprime to $n$. 
\begin{Lem}
For $n\not\in\{1,2,3,4,6\}$ we have $g(n)\leq\frac{2}{5}n$.
\end{Lem}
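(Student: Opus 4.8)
The plan is to exploit the fact that $g(n)$ depends only on the radical $\mathrm{rad}(n)=\prod_{p\mid n}p$, since an integer is coprime to $n$ precisely when it is coprime to $\mathrm{rad}(n)$. Because $\frac{2}{5}n\ge\frac{2}{5}\mathrm{rad}(n)$, it suffices to establish $g(r)\le\frac{2}{5}r$ for squarefree $r$ and then to read off the exceptional $n$ from the finitely many squarefree radicals for which the inequality fails. Writing $r=p_1\cdots p_k$ with $p_1<\cdots<p_k$, I would organize the argument by $k=\omega(r)$. The decisive elementary tool is the following observation: if $m=p_1\cdots p_{k-1}$ and $p_k>g(m)$, then any block of consecutive integers all non-coprime to $r$ contains at most one multiple of $p_k$ --- otherwise the $p_k-1\ge g(m)$ integers strictly between two consecutive multiples of $p_k$ would form a block of at least $g(m)$ consecutive integers all non-coprime to $m$, which is impossible. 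Deleting that single multiple splits the block into at most two blocks non-coprime to $m$, each of length at most $g(m)-1$, whence
\[
g(r)\le 2g(m).
\]

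For $k=1$ we have $g(p)=2$, so the inequality holds exactly when $p\ge5$. For $k=2$ the recursion applies because $p_2\ge3>2=g(p_1)$, giving $g(p_1p_2)\le4$; comparing with $\frac{2}{5}p_1p_2$ shows the bound holds unless $p_1p_2=6$. For $k=3$ it applies again, since $p_3\ge5>4\ge g(p_1p_2)$, giving $g(r)\le8\le\frac{2}{5}r$ because $r\ge30$. These three cases are exactly where the exceptional radicals $r\in\{1,2,3,6\}$ appear, and lifting back through the radical one finds that the inequality fails precisely for $n\in\{1,2,3,4,6\}$.

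For $k\ge4$ the recursion eventually stalls (the largest small prime no longer outgrows $g$ of the rest), so I would switch to a sieve bound. Inclusion--exclusion over the divisors of the squarefree $r$ shows that a window of $N$ consecutive integers contains more than $N\frac{\varphi(r)}{r}-2^{k}$ integers coprime to $r$, so $g(r)\le 2^{k}\,r/\varphi(r)$ up to rounding. Since $\varphi(r)=\prod_i(p_i-1)$ is minimized, for fixed $k$, by the first $k$ primes, one checks $\varphi(r)\ge\frac{5}{2}\cdot2^{k}$ for all $k\ge4$ (the base case $k=4$ gives $48\ge40$, and each further prime multiplies the left side by at least $10$ against a factor $2$ on the right). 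This yields $2^{k}r/\varphi(r)\le\frac{2}{5}r$ with room to absorb the rounding, completing this range.

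The genuine obstacle is the small-prime regime $k\le3$, not the large one: for many prime factors $n$ is enormous compared with $g(n)$ (the primorial lower bound on $r$ beats $2^{k}$ comfortably), whereas for few small primes the sieve error term $2^{k}$ swamps the main term and one is forced into the combinatorial block analysis above. That analysis is also what pins down the exact exceptional set, so the care there lies in bookkeeping the boundary cases rather than in any deep estimate.
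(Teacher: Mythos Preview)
Your argument is correct: the reduction to the radical, the recursion $g(r)\le 2g(m)$ when $p_k>g(m)$, the case analysis for $k\le 3$, and the inclusion--exclusion bound $g(r)\le 2^{k}r/\varphi(r)$ (with the check $\varphi(P_k)\ge\tfrac{5}{2}\cdot 2^k$ for $k\ge 4$, leaving enough slack to absorb the $+1$ from rounding) all go through, and the lifting from the exceptional radicals $\{1,2,3,6\}$ to $n\in\{1,2,3,4,6\}$ is straightforward.

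The paper's proof, however, is considerably shorter and takes a different route. Rather than splitting by $\omega(n)$ and combining a recursion with a sieve, it fixes the largest prime-power divisor $q\mid n$, observes that among the arithmetic progression $\tfrac{an}{q}+1$ every two consecutive terms contain one coprime to $n$ (since $(\tfrac{an}{q}+1,n)=(\tfrac{an}{q}+1,q)$ and $n/q$ is a unit modulo the prime dividing $q$), and concludes $g(n)\le 2n/q$ in one stroke. This settles the lemma whenever $q\ge 5$, leaving only $n\mid 12$ to check by hand. Your approach is the standard ``few primes by hand, many primes by sieve'' decomposition and has the advantage of being assembled from off-the-shelf tools; the paper's approach trades that modularity for a single constructive observation that yields a uniform bound $g(n)\le 2n/q$ with no case split on $\omega(n)$ at all.
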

\begin{proof}
Let $q$ be the largest prime power divisor of $n$. Consider the integers $\frac{an}{q}+1$. We have $(\frac{an}{q}+1, n) = (\frac{an}{q}+1, q)$. Moreover we have that $\frac{n}{q}$ is coprime to $q$, thus for any $a$ we obtain that one of $\frac{an}{q}+1$ and $\frac{(a+1)n}{q}+1$ is coprime to $n$. Hence the largest difference between consecutive integers coprime to $n$ is $\leq\frac{2n}{q}$, and our claim follows, provided that $q\geq 5$. If $q<5$, then $n=2^a3^b$ with $a\leq 2$ and $b\leq 1$. For $n=12$ we check directly that $g(12)=4<\frac{24}{5}$, while the other combinations are listed in the Lemma.
\end{proof}
\begin{Lem}
Suppose that $6<(n,t)<\frac{n}{10}$ or $t=5$. Then our claim holds true.
\end{Lem}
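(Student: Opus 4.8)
By Lemma~\ref{Lem:previous} we may assume $n\ge 10000$, and we may assume that none of the relations $s+t=n$, $s+2t=n$, $2s+t=n$, or ($n$ even and $|s-t|=\frac n2$) holds, since otherwise the claim is immediate. Under these assumptions the plan is to exhibit a single residue $a$ with $(a,n)=1$ for which $(as\bmod n)+(at\bmod n)\le\frac n2$, contradicting the hypothesis; this shows that the present configuration cannot occur, which is exactly what the claim asserts in this range.

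The first step is the observation that each of the four excluded relations forces $(n,t)\le 2$. Indeed, writing $d=(n,t)$, the relations $s+t=n$ and $s+2t=n$ give $d\mid s$, whence $d\mid(n,s,t)=1$; the relation $2s+t=n$ gives $d\mid 2s$ together with $(n,s,t)=(d,s)=1$, so $d\mid 2$; and $|s-t|=\frac n2$ gives $(d,\tfrac n2)=(n,s,t)=1$ with $d\mid n$, which again forces $d\mid 2$. Consequently, when $6<(n,t)<\frac n{10}$ no excluded relation can hold and the task reduces to producing the bad residue $a$; when $t=5$ we have $(n,t)\le 5$, the relations are genuinely available, and they are precisely what must be ruled out before the same construction applies.

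For the main range $6<d<\frac n{10}$ write $n=dm$ and $t=dt_0$ with $(m,t_0)=1$, so that $at\bmod n=d\,(at_0\bmod m)$ depends only on $a\bmod m$ and is always a multiple of $d$. I restrict attention to the class $a\equiv t_0^{-1}\pmod m$, on which $at\bmod n=d$. Along this class $as\bmod n$ runs through a coset of the subgroup of $\Z/n\Z$ generated by $ms\bmod n$, namely $d/(s,d)$ equally spaced values of common difference $m\,(s,d)$. The hypothesis $(n,s,t)=1$ with $d>6$ gives $d/(s,d)\ge 3$: were this quotient $1$ or $2$ we would have $d\mid s$, respectively $\tfrac d2\mid s$, so that $d$ or $\tfrac d2$ would divide $(n,s,t)=1$, forcing $d\le 2$. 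Hence the spacing $m\,(s,d)=\frac{ng}{d}$ (with $g=(s,d)$) is at most $\frac n3$, while $d<\frac n{10}$ gives $\frac n2-d>\frac{2n}5>\frac n3$; therefore at least one of these values of $as\bmod n$ lies in $(0,\frac n2-d)$. For a corresponding $a$ we obtain $(as\bmod n)+(at\bmod n)<(\frac n2-d)+d=\frac n2$, the desired contradiction, provided $a$ can be chosen coprime to $n$.

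This coprimality is the one genuinely delicate point, and I expect it to be the main obstacle. Every $a$ in the class is automatically coprime to $m$, so only the primes dividing $d$ but not $m$ must be avoided; let $n'<\frac n{10}$ denote their product. The residues of our class landing in the target window are numerous, forming one or more arithmetic progressions inside $[1,n]$, and the bound $g(n')\le\frac25 n'$ on Jacobsthal's function ensures that integers coprime to $n'$ occur with gaps at most $\frac25 n'$, which for $n\ge 10000$ is far smaller than the stretch available to search; combined with the equidistribution of $as\bmod n$ along the class, this yields an admissible $a$ and completes the contradiction. For the remaining case $t=5$ one proceeds in the same spirit with $m=n/(n,5)$: when $5\mid n$ the construction above applies verbatim and gives a contradiction, while when $5\nmid n$ one first invokes Lemma~\ref{Lem:PNT} (legitimately, as the relevant moduli $5$ and $10$ are at most $10$) to locate prime values of $a$ in the required residue classes, and shows that unless $s$ is congruent to one of $-5$, $-10$, $-5\cdot 2^{-1}$, or $5+\frac n2\pmod n$, that is, unless one of the listed relations holds, such a prime drives the sum out of $(\frac n2,\frac{3n}2)$. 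This last subcase, in which genuine solutions exist and must be separated from the near misses, is where the argument is most delicate.
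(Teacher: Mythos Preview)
Your overall strategy --- fix $a$ in the residue class modulo $m=n/d$ that makes $at\bmod n$ equal to $d$, then search within that class for an $a$ with $as\bmod n$ small and $(a,n)=1$ --- is exactly the paper's approach.  The paper simply normalises $t=(n,t)$ first, so that $t_0=1$ and your class becomes $a\equiv 1\pmod m$; note also that $(s,d)\mid(n,s,t)=1$, so your $g=(s,d)$ is always $1$ and your detour through $d/(s,d)\ge 3$ collapses to $d\ge 3$.  The first two paragraphs on the excluded relations are correct but superfluous: you are producing a witness $a$ that violates the hypothesis of the conjecture, so the conclusion is irrelevant.

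The genuine gap is the coprimality step.  You assert that Jacobsthal's bound on $n'$, together with ``equidistribution'', yields an $a$ in the window with $(a,n')=1$, but you never carry this out, and the sketch you give does not work as written.  The $a$'s in your class with $as\bmod n$ in the target window do form an arithmetic progression, but its common difference modulo $n'$ is $s^{-1}m$, which is a unit, not $1$; so these $a$'s do \emph{not} hit consecutive residues modulo $n'$, and the gap bound $g(n')\le\frac25 n'$ for \emph{consecutive} integers does not apply to them directly.  What actually works is to parametrise by the \emph{position} $j$ of the point $as\bmod n$ among the $d$ equally spaced values: the window condition becomes ``$j$ lies in an interval $J$'', and the condition $(a,n')=1$, rewritten via CRT, becomes $(j-c,n')=1$ for a single shift $c$.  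Now Jacobsthal applies to $j$, and since $|J|>(\tfrac12-\tfrac{d}{n})d>\tfrac25 d\ge\tfrac{g(d)}{d}\cdot d\ge g(n')$, a valid $j$ exists.  This is precisely the paper's argument (stated there for $k$ and then, implicitly, transferred to positions), and it is the missing idea in your write-up.

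Finally, your treatment of $t=5$ is off: in the paper ``$t=5$'' is after the normalisation $t=(n,t)$, so it means $(n,t)=5$ and hence $5\mid n$ automatically; your subcase $5\nmid n$ never arises (and would in any case fall under $(n,t)=1$, already handled by Lemma~\ref{Lem:previous}).  The appeal to Lemma~\ref{Lem:PNT} here is unnecessary and the sketch you give for it is not a proof.
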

\begin{proof}
By multiplying with a suitable $a\in\Z_n^*$ we may assume that $t=(n,t)$. Suppose that $a\equiv 1\pmod{n/t}$ satisfies $(a,n)=1$. Then $a\cdot(\frac{t}{n},\frac{s}{n})\bmod 1 = (\frac{t}{n},\frac{as}{n})$. Write $a=1+k\frac{n}{t}$. If our claim was not true, then the points $\frac{as}{n}\bmod 1=\frac{s}{n}+k\frac{s}{t}\bmod 1$ with $(1+k\frac{n}{t}, n)=1$ do not hit the interval $[0,n/2-t]$. Since $1+k\frac{n}{t}$ is coprime to $\frac{n}{t}$, we have $(1+k\frac{n}{t}, n)=(1+k\frac{n}{t}, t)$. If $k$ ranges over an interval of length $>g(t)$, then this expression has to become 1 for at least one value of $k$ in this interval. Hence for $t>6$ or $t=5$ we obtain that $\{\frac{s}{n}+k\frac{s}{t}\bmod 1:(1+k\frac{n}{t}, n)=1\}$ can only avoid an interval of length $\leq \frac{g(t)}{t}n\leq\frac{2}{5}n$. But we have that this set avoids an interval of length $\frac{n}{2}-t$, thus we obtain $t\geq\frac{n}{10}$.
\end{proof}
\begin{Lem}
Suppose that $\frac{n}{10}\leq(n,t)<\frac{n}{2}$. Then our claim holds true.
\end{Lem}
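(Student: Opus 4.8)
The plan is to convert the common divisor into an explicit unit that violates the hypothesis, leaving only small moduli to be handled by Lemma~\ref{Lem:previous}(2). As in the preceding lemma I multiply $(s,t)$ by a suitable unit to arrange $t=d:=(n,t)$, and I write $m:=n/d$, so that $3\le m\le 10$ and, decisively, $t/n=1/m$; thus $at\bmod n=d\,(a\bmod m)$ for every unit $a$. Since $(n,s,t)=1$ forces $(s,d)=1$, it follows that $(n,s)=(m,s)=:e$ is a divisor of $m$ with $(e,d)=1$, and writing $s=e\sigma$ we have $(\sigma,n/e)=1$. If $e=1$ there is nothing left to prove, since Lemma~\ref{Lem:previous}(1) already yields the claim. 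So from now on $e\ge 2$, and I set $\mu:=m/e$.

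The orienting remark is that in the present range none of the exceptional conclusions can hold: $s+t=n$ and $s+2t=n$ force $(n,t)=1$, while $2s+t=n$ and (for even $n$) $|s-t|=n/2$ force $(n,t)\le 2$, all incompatible with $d\ge n/10$ once $n>20$. Consequently, for large $n$ the conjecture can only hold here by having its hypothesis fail, and it is exactly this failure that I will establish for $n\ge 10000$. The device is that $(e,d)=1$ lets the first coordinate be made as small as possible: the congruence $a\equiv\sigma^{-1}\pmod{n/e}$ has a solution coprime to $n$ --- the primes of $e$ divide $m$ but not $d$, hence can be kept away from $a$ --- and every such $a$ satisfies $as\equiv e\pmod n$. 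What matters is that all these $a$ realise the single tiny value $as\bmod n=e$, yet their residues $a\bmod m$ still vary, being pinned only modulo $\mu$, so I retain a handle on $at\bmod n=d\,(a\bmod m)$.

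It remains to select, among these $a$, one whose residue $r:=a\bmod m$ is a unit modulo $m$ with $r<m/2$: then $as\bmod n+at\bmod n=e+dr<n/2$ for $n$ large, contradicting the hypothesis. The residue $r$ is fixed modulo $\mu$ (it equals $\sigma^{-1}$ there, hence has a representative below $\mu\le m/2$) and free modulo $e$, and I would simply run through the finitely many factorisations $e\mu=m\le 10$, checking in each that some unit residue below $m/2$ is attainable. For almost all patterns this is immediate, and the value $e$ paired with a small line $r$ already forces the sum below $n/2$.

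I expect the real obstacle to be the handful of resonant patterns --- chiefly $e=2$, where $\mu=m/2$ and the forced class modulo $\mu$ may place its smallest unit representative exactly at the boundary $r\ge m/2$, so that the value $e$ no longer fits under the threshold. In those cases I must drop the demand $as\bmod n=e$ and instead locate a small admissible value of $as\bmod n$ --- a multiple of $e$ coprime to $d$, and hence plentiful since $(e,d)=1$ --- sitting over a genuinely lower line $r$, balancing the two coordinates directly. This amounts to showing that $as\bmod n$ and $at\bmod n$ cannot both be forced into the upper half at once, the one place where the gap estimate $g(n)\le\tfrac25 n$ from the earlier lemma is actually needed; the finitely many still-tight configurations then fall to Lemma~\ref{Lem:previous}(2), and the bound $n\ge 10000$ provides a wide margin throughout.
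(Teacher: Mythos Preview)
Your route is genuinely different from the paper's, and it can be made to work, but as written it is a plan rather than a proof, and one of its signposts points the wrong way.

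\textbf{What the paper does.} The paper normalises $s$, not $t$: it sets $s=(n,s)\le 6$, puts $q=n/(n,t)\le 10$, chooses $u$ with $ut\equiv(n,t)\pmod n$, and takes $a$ to be the \emph{least positive integer} with $(a,n)=1$ and $a\equiv u\pmod q$. Then $as\bmod n+at\bmod n\le a\cdot(n,s)+(n,t)$, and the hypothesis is violated unless $a$ is large. The size of $a$ is controlled analytically: if $a$ were large, every prime $p\equiv u\pmod q$ below it would divide $n$, and Lemma~\ref{Lem:PNT} (the Ramar\'e--Rumely effective prime number theorem in progressions) then forces $n$ to be small. No case distinction on $m$ is needed.

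\textbf{What you do, and where it is incomplete.} You normalise $t=d$, fix $as\equiv e\pmod n$ by a congruence modulo $n/e$, and try to steer $r=a\bmod m$ below $m/2$. This is sound in outline, but two of your structural claims are off. First, ``free modulo $e$'' is only true when $(e,\mu)=1$; for instance with $m=4$, $e=\mu=2$ the residue $r\bmod 2$ is already pinned. Second, the parenthetical ``hence has a representative below $\mu\le m/2$'' is misleading: what you need is a \emph{unit} mod $m$ below $m/2$ in the prescribed class mod $\mu$, and that can fail --- e.g.\ $m=6$, $e=2$, $\mu=3$, $\sigma^{-1}\equiv 2\pmod 3$ forces $r=5$. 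Your proposed fix (replace $as\equiv e$ by $as\equiv ke$ for a small $k$) does repair every such case, since whenever $2\mid e$ one has $d$ odd and $k=2$ or $k=4$ is admissible; but this has to be checked, not asserted. Finally, the appeal to the Jacobsthal bound $g(n)\le\tfrac25 n$ does not fit here: that lemma was used for the range $(n,t)<n/10$, whereas in the present range the finite case check over $m\le 10$ already closes everything. The paper's analytic device buys uniformity and brevity; your approach is more elementary but needs the casework to be written out.
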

\begin{proof}
If $(s,n)\geq\frac{n}{10}$, then from $(s,t,n)=1$ we obtain $n\leq 100$. Hence we may assume that $(n,s)\leq 6$. We normalize in such a way that $s=(n,s)$. Put $q=\frac{n}{(n,t)}$, and let $u\in\Z_{n/(n,t)}$ be the solution of $ut\equiv (t,n)\pmod{n}$. Let $a$ be the smallest integer satisfying $(n,a)=1$ and $a\equiv u\pmod{q}$. Then our claim follows, unless $(n,s)a+(n,t)\geq n/2$. If the last condition is not true, then $n$ is divisible by all prime numbers $p\leq\frac{n}{6}$ which satisfy $p\equiv u\pmod{q}$. Hence $\log n\geq\theta(n/6, u, q)$. If we can apply Lemma~\ref{Lem:PNT}, then $\log n\geq\frac{n}{12\varphi(q)}\geq\frac{n}{72}$, thus $n<440$, which is sufficiently small. If on the other hand the lower bound for $\theta$ is not applicable, then $n/6< 89$, which is also sufficiently small.
\end{proof}
Hence we may restrict ourselves to the case $(n,s), (n,t)\in\{1,2,3,4,6\}$. For $(n,s)=1$ the theorem is Lemma~\ref{Lem:previous}, and by symmetry we may assume that  $(n,s), (n,t)\in\{2,3,4,6\}$. But $1=(n,s,t)=((n,s), (n,t))$, thus without loss it suffices to consider the case $(n,s)=3$, $(n,t)\in\{2,4\}$. In particular we have $6|n$. Multyplying by a suitable integer coprime to $n$ we may assume that $s=3$.
\section{The case $s=3$}
For an integer $n$, denote by $f(n)$ the least integer $a$, such that $a(a+2)$ is coprime to $n$.
\begin{Lem}
 Let $p_1, p_2$ be the two largest prime divisors of $n$. If both are greater than 3, then $f(n)\leq\frac{5n}{p_1p_2}$.
 \end{Lem}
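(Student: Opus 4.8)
The plan is to reduce to the radical of $n$ and then split the primes of $n$ into the two large ones $p_1,p_2$ and all the others, treating the two groups by completely different devices. Since $a(a+2)$ is coprime to $n$ exactly when it is coprime to $\mathrm{rad}(n)=\prod_{p\mid n}p$, the quantity $f(n)$ depends only on $\mathrm{rad}(n)$, and $\frac{5\,\mathrm{rad}(n)}{p_1p_2}\le\frac{5n}{p_1p_2}$. Writing the distinct prime divisors of $n$ as $p_1>p_2>\dots>p_k$ and putting $m=\prod_{i\ge 3}p_i$ (with $m=1$ if $k\le 2$), it therefore suffices to produce an integer $a$ with $1\le a\le 5m$ such that $a(a+2)$ is coprime to $p_1p_2m=\mathrm{rad}(n)$; this yields $f(n)\le 5m\le\frac{5n}{p_1p_2}$.

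For the small primes I would fix the residue once and for all. Let $r$ be the least positive integer with $r(r+2)$ coprime to $m$; such an $r$ exists and satisfies $r\le m$ (indeed $r\le m-1$ once $m\ge 2$, and $r=1$ when $m=1$). I then consider the five translates $a_j=r+jm$ for $j=0,1,2,3,4$. Each satisfies $a_j\equiv r\pmod m$ and $a_j+2\equiv r+2\pmod m$, so $a_j(a_j+2)$ is automatically coprime to $m$, while $a_j\le r+4m\le 5m$.

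It remains to choose $j$ so that $a_j$ also avoids the forbidden residues $0$ and $-2$ modulo $p_1$ and modulo $p_2$. Here I use that $p_1>p_2>3$ forces $p_1,p_2\ge 5$ and that $\gcd(m,p_1p_2)=1$: as $j$ runs through $\{0,1,2,3,4\}$ the values $a_j\bmod p_i$ are five distinct residues, so each of the two forbidden residues modulo $p_i$ is attained for at most one $j$. Thus $p_1$ rules out at most two of the five candidates and $p_2$ at most two more, leaving at least one $j$ for which $a_j(a_j+2)$ is coprime to $p_1p_2$ as well, hence to $\mathrm{rad}(n)$; this gives $f(n)\le a_j\le 5m\le\frac{5n}{p_1p_2}$. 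The only genuinely delicate point — and the source of the constant $5$ — is this final counting step: one needs the five translates to occupy distinct residue classes modulo each large prime, which is precisely why the hypothesis $p_i>3$, i.e. $p_i\ge 5$, is invoked, so that $2+2<5$ forces a surviving candidate. The degenerate case $m=1$ causes no trouble and may be settled directly by $a=1$, since $1\cdot 3$ is coprime to $n$ when $3\nmid n$.
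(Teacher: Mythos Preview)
Your proof is correct and follows essentially the same route as the paper: both pick a base value whose product with its shift by $2$ is coprime to the ``small'' part of $n$, consider its five translates by that small part, and use $p_1,p_2\ge 5$ to see that the forbidden residues $0,-2$ modulo each large prime kill at most four of the five candidates. The only cosmetic difference is that you first pass to $\mathrm{rad}(n)$ and use $m=\prod_{i\ge 3}p_i$ as the step size, whereas the paper keeps multiplicities and steps by $n'=n/p_1^{e_1}p_2^{e_2}$; since coprimality of $a(a+2)$ with $n'$ is the same as with $m$, the arguments are interchangeable.
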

\begin{proof}
Let $n'$ be the largest divisor of $n$ coprime to $p_1p_2$. Then $n'\leq\frac{n}{p_1p_2}$. Pick an integer $a$ such that $a(a+2)$ is coprime to $n'$. Since $(n', p_1p_2)=1$ and $p_1, p_2\geq 5$, we have that the five integers $a, a+n', \ldots a+4n'$ are distinct modulo $p_1$ as well as modulo $p_2$. Hence at most two of them are 0 or $-2$ modulo $p_1$, and at most two are 0 or $-2$ modulo $p_2$, hence for some $\nu\in\{0,\ldots, 4\}$ we have that $(a+\nu n')(a+\nu n'+2)$ is coprime to $p_1p_2$. But then $(a+\nu n')(a+\nu n'+2)$ is coprime to $n$, and we conclude that
\[
f(n)\leq a+4n'\leq 5n'\leq\frac{5n}{p_1p_2}.
\]
\end{proof}
\begin{Lem}
Suppose that $n$ is a counterexample to Theorem 1. Then $f(n)\geq\frac{n-12}{18}$.
\end{Lem}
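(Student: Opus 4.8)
The plan is to argue by contraposition: assuming $n$ is a counterexample, I suppose $a:=f(n)<\frac{n-12}{18}$ and derive a contradiction by exhibiting a single unit $b\in\Z_n^*$ with $3b\bmod n+tb\bmod n\le\frac n2$, which contradicts the lower bound in the hypothesis (recall $s=3$ after the normalization). Two preliminary observations make the search for $b$ one-sided. First, since $3\mid n$, the quantity $3b\bmod n$ is always a multiple of $3$. Second, for any $b$ with $3b\bmod n<\tfrac n6$ one has $3b\bmod n+tb\bmod n<\tfrac n6+n<\tfrac{3n}2$, so the \emph{upper} threshold is met automatically; only the lower inequality can fail. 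Because $a<\frac{n-12}{18}<\frac n{18}$ is small, any base I use will have $3b\bmod n<\tfrac n6$, so it suffices to force the sum down to $\frac n2$.

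The engine of the argument is a shift by $\tfrac n3$. For a base residue $b_0$ with $0\le b_0<\tfrac n3$, the three lifts $b_0,\,b_0+\tfrac n3,\,b_0+\tfrac{2n}3$ all satisfy $3b\equiv 3b_0\pmod n$, so they share the value $3b\bmod n=3b_0$. On the other hand, since $d:=(n,t)\in\{2,4\}$ is prime to $3$ we have $3\nmid t$ and $t\cdot\tfrac n3\equiv (t\bmod 3)\tfrac n3\pmod n$, so as $b$ runs through the three lifts the value $tb\bmod n$ runs through the three residues $\{w,\,w+\tfrac n3,\,w+\tfrac{2n}3\}$ for some $w\in[0,\tfrac n3)$. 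Hence exactly one lift carries $tb\bmod n<\tfrac n3$; moreover $d\mid\tfrac n3$ (for $d=2$ because $6\mid n$, and $d=4$ forces $12\mid n$), so that value is in fact $\le\tfrac n3-d$. For this lift $b^{*}$ I would obtain
\[
3b^{*}\bmod n+tb^{*}\bmod n\le 3b_0+\tfrac n3-d ,
\]
and with $b_0\le a+2$ and $d\le 4$ the right-hand side is $\le\frac n2$ as soon as $n>18a+12$. This is exactly the contradiction sought, \emph{provided} $b^{*}$ can be taken to be a unit.

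Securing that proviso is where the definition of $f$ is used. Across the triple $b_0,b_0+\tfrac n3,b_0+\tfrac{2n}3$, divisibility by any prime $p\mid n$ with $p\ne 3$ is constant, since then $p\mid\tfrac n3$; only the prime $3$ can separate the lifts, and only when $9\nmid n$. Thus if $9\mid n$ and $b_0$ is a unit, all three lifts are units and I simply take $b_0=a$. If $9\nmid n$, precisely one lift is divisible by $3$, and here I would run the construction for both bases $b_0=a$ and $b_0=a+2$ — both prime to $n$ by the very definition of $f(n)$, and occupying the two residues $2,1$ modulo $3$ — showing that for at least one of the two choices the lift carrying $tb\bmod n<\tfrac n3$ is not the exceptional non-unit, so that a unit $b^{*}$ with the required bounds exists.

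\textbf{The main obstacle} is precisely this coprimality bookkeeping: arranging that the lift with small $tb\bmod n$ avoids the unique non-unit in the triple, across the cases $9\mid n$ and $9\nmid n$ and the two possibilities $d\in\{2,4\}$, and doing so sharply enough to reach the stated constant $\frac{n-12}{18}$ rather than a weaker bound. This is the one step for which the use of \emph{both} $a$ and $a+2$, and hence the ``$+2$'' in the definition of $f$, is indispensable; everything else reduces to the elementary estimate above, with the bound $n\ge 10000$ absorbing any small-$n$ slack.
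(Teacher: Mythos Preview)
Your plan is natural, but the step you yourself flag as ``the main obstacle'' is genuinely missing, and it is not just bookkeeping. In the case $9\nmid n$, for each base $b_0\in\{a,a+2\}$ exactly one of the three lifts $b_0+j\tfrac{n}{3}$ is a non-unit (the one with $b_0+j\tfrac{n}{3}\equiv 0\pmod 3$), and exactly one lift has $tb\bmod n<\tfrac n3$. You assert that for at least one choice of $b_0$ these two lifts differ, but you give no argument. Tracking residues, with $a\equiv 2$, $a+2\equiv 1\pmod 3$ and $\tfrac n3\equiv d\pmod 3$, the non-unit lift sits at $j=d$ for $b_0=a$ and at $j=3-d$ for $b_0=a+2$; whether the small-$tb$ lift lands there depends on which third of $[0,n)$ contains $ta\bmod n$ and $t(a+2)\bmod n$, and the passage from $ta$ to $t(a+2)$ shifts by $2t\bmod n$, about which you know essentially nothing a priori. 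Without further input about $t$, there is no reason the two bad events cannot occur simultaneously. (There is also a smaller slip: with $b_0=a+2$ and $d=(n,t)=2$ your inequality $3b_0+\tfrac n3-d\le\tfrac n2$ only yields $a\le\tfrac{n-24}{18}$, not $\tfrac{n-12}{18}$; writing ``$d\le 4$'' does not help, since you need a lower bound on $d$.)

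The paper closes exactly this gap by \emph{first} extracting information about $t$. Before touching $a$ and $a+2$, it applies the counterexample hypothesis to the specific unit $\tfrac{dn}{3}+1$ (paper's $d\in\{1,2\}$ with $n\equiv 3d\pmod 9$) to force, after a harmless relabelling, both $t\in[\tfrac n2-2,\tfrac{2n}{3}]$ and $t\equiv\tfrac n3\pmod 3$. These two constraints pin down $at\bmod n$ and $(a+2)t\bmod n$ to short explicit intervals (of length about $3a$ and $\tfrac n6$), after which subtracting gives an interval for $2t$ that is compared directly with $2t\in[n-4,\tfrac{4n}{3}-2]$; the intersection is empty unless $a\ge\tfrac{n-12}{18}$. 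In other words, the paper replaces your unresolved case split by a preliminary localisation of $t$, and that localisation is what makes the $a$/$a+2$ comparison decisive.
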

\begin{proof}
We already know that we may assume that $s=3$ and $6|n$. We have $(n, \frac{n}{3}+1)|3$, thus, if $9\nmid n$, then $t, t\cdot(\frac{n}{3}+1), t(\frac{2n}{3}+1)$ are 3 integers, which modulo $n$ are contained in $[\frac{n}{2}-2, n]$. But then $\frac{n}{3}\geq \frac{n}{2}-2$, which is impossible. Now define $d\in\{1,2\}$ by means of the congruence $n\equiv 3d\pmod{9}$. Then $(\frac{dn}{3}+1, n)=1$, and we obtain that both $t$ and $t\cdot(\frac{dn}{3}+1)\bmod n$ are contained in $[\frac{n}{2}-2, n]$. Suppose without loss that $t<t\cdot(\frac{dn}{3}+1)\bmod n$, thus $t\in[\frac{n}{2}-2, \frac{2n}{3}]$. Then we have $\frac{tdn}{3}+t\equiv \frac{n}{3}+t\pmod{n}$, thus $td\equiv 1\pmod{3}$, and therefore $t\equiv\frac{n}{3}\pmod{3}$.

Let $a$ be an integer such that $a(a+2)$ is coprime to $n$. Then we have $a\equiv 2\pmod{3}$, thus $at\equiv -\frac{n}{3}\pmod{3}$, and therefore $at\bmod n\in[\frac{5n}{6}-3a+1, n-1]$. Similarly $(a+2)t\equiv \frac{n}{3}\pmod{3}$, thus $(a+2)t\bmod n\in[\frac{n}{2}-3a-5, \frac{2n}{3}-1]$. Hence
\[
2t\in[\frac{n}{2}-3a-5, \frac{2n}{3}-1]-[\frac{5n}{6}-3a+1, n-1] = [-\frac{n}{6}+3a-2, -\frac{n}{2}-3a-4].
\]
On the other hand
\[
2t\in 2[\frac{n}{2}-2,\frac{2n}{3}-1]=[n-4, \frac{4n}{3}-2].
\]
These two intervals intersect modulo $n$ if and only if either $\frac{n}{6}-3a+2\leq 4$, or $\frac{n}{3}-2\geq\frac{n}{2}-3a-4$. Both conditions are equivalent to $\frac{n}{6}\leq 3a+2$, solving for $a$ yields our claim.
\end{proof}
We can now prove Theorem 1. Suppose $n$ is a counterexample, and let $p_1, p_2$ be the largest prime divisors of $n$. Either at most one of them is $\geq 5$, or we have
\[
\frac{5n}{p_1p_2}\geq f(n)\geq\frac{n-12}{18}>\frac{n}{19},
\]
provided that $n>18\cdot 19$. This implies that the product of the two largest prime divisors of $n$ is $< 95$, thus either $n$ has at most one prime divisor $\geq 5$, or the product of the two largest prime divisors is $<100$. In any case we obtain that $n$ has at most one prime factor $\geq 10$. But then $n$ is coprime to at least one of $11\cdot 13$ or $17\cdot 19$, thus $f(n)\leq 17$ and therefore $n\leq 18\cdot 17+12=318$, which is impossible.

We would like to use this opportunity to correct an error in \cite{Billardold}. In Lemma~4.4 it was falsely claimed that if $g$ is the Jacobsthal function as defined above, $P_k$ is the product of the first $k$ prime numbers, and $n$ is an integer with $k$ distinct prime divisors, then $g(n)\leq g(P_k)$. In general this inequality does not hold, however, in \cite{Billardold} it was only used for $k\leq 8$, where it can be verified directly. In fact, denoting by $\omega(n)$ the number of prime divisors of $n$,  Hajdu and Saradha have computed $\max\{g(n)|\omega(n)=k\}$ for all $k\leq 24$, and they showed that the maximum is attained for $n$ equal to the product of the first $k$ primes, if $k\leq 23$, while for $k=24$ the product of the first 21 primes with 73, 89 and 101 yields a larger value than the product of the first 24 primes. I would like to thank Pasemann for making me aware of this mistake.

\end{document}